\newtheorem{remark}{Remark}
\newtheorem{theorem}{Theorem}
\newtheorem{corollary}{Corollary}
\title{}
\author{}
\date{}
\author{ }
\date{}
\begin{document}

\maketitle

{\centering\Large \bf New Identities for the Family of Zeta Function by Using
Distributional Representations}\\
\vspace{0.5cm}
Asghar Qadir$^*$ and Aamina Jamshaid$^\dag$\\
$^*$Pakistan Academy of Sciences\\
3 Constitution Avenue, G-5/2, Islamabad 44000, Pakistan\\
asgharqadir46@gmail.com\\
$^\dag$Abdus Salam School of Mathematical Sciences\\
Government College University \\
68-B New Muslim Town, Lahore, Pakistan\\
aaminajamshaid@gmail.com\\
Current affiliation, College of E\&ME, NUST, Islamabad

\vspace{0.5cm}

%{\bf Abstract} 
Chaudhry and Qadir obtained new identities for the gamma
function by using a ``distributional representation'' for it. Here we obtain
new identities for the Riemann zeta function and its family by using that
representation for them. This also leads to new identities involving the
Dirichlet eta and Lambda functions.

\section{Introduction}

Chaudhry and Qadir defined the Fourier transform representation (FTR) and
distributional representations (DR) for gamma functions and obtained new
identities for them \cite{dgamma}. Earlier, Chaudhry and Zubair had defined
the ``extended gamma function'' \cite{gamma},
\begin{equation}%1
\Gamma_b(s)=\int_0^{\infty}t^{s-1}e^{-t-b/t}dt,\; (b > 0;~ b=0,~\sigma>0)~,
\end{equation}%1
where $s=\sigma+\iota\tau$, which reduces to the gamma function for $b=0$.
Distributions are functionals, which are defined only in terms of an inner
product with sufficiently well-behaved functions including the gamma and zeta
functions. In particular, they used the Dirac delta functions, which is
normally defined in the real domain. However, it was used for complex
variables, assuming that this could be done. The same assumption had been
made by Lighthill \cite{lighthill}, but was made rigorous by Qadir and
Tassadiq \cite{ggamma} and Tassadiq \cite{thesis}, who defined the FTR and
DRs for this function\cite{ggamma}. Chaudhry et al. \cite{erzf} used the
method of extension for the Riemann zeta function. In fact it could be
applied to the full family of zeta functions \cite{edwards}. The question
naturally arose whether the FTR and DR could be defined for the Riemann zeta
function and its family, and whether the DR could be used to obtain new
identities for them. The former question was addressed by Jamshaid
\cite{athesis} and it was found that the DR could be defined for some, but
not all of them. Here we address the latter question and find a number of
identities that are nontrivial, including two for the Dirichlet eta
\cite{milgram} and lambda functions  \cite{roy}. It will be seen that the new
representation is defined in terms of an inner product.

The DR of the gamma function is obtained from its FTR
\begin{equation}%2
\Gamma(s)=\mathcal{F}[e^{\sigma x}e^{-e^{x}}; \tau]~,
\end{equation}%2
where $\mathcal{F}$ is the Fourier transform, the first term in the bracket
is the function on which it acts and $\tau$ is the new variable of the
function which then defines the complex variable, $s$. Using the property of
the FTR \cite{zemanian}
\begin{equation}%3
\mathcal{F}[e^{\sigma x};\tau]=2\pi\delta(\tau-\iota\sigma),
\end{equation}%3
yields an infinite series of Dirac-delta functions
\begin{equation}%4
\Gamma(\sigma+\iota\tau)=\mathcal{F}[f_{\sigma}(x);\tau]=2\pi\sum_{n=0}^{\infty}
\frac{(-1)^n}{n!}\delta(\tau-\iota(\sigma+n))~,
\end{equation}%4
where $f_{\sigma}(x)=e^{\sigma x}e^{-e^{x}}$, and so when the FTR of the
extension
\begin{equation}%5
\Gamma_b(\sigma+\iota\tau)=\mathcal{F}[e^{\sigma x}e^{-e^x-be^{-x}};\tau]
\end{equation}%5
is used, it gives the DR
\begin{equation}%6
\Gamma_b(\sigma+\iota\tau)=2\pi\sum_{n=0}^{\infty}\sum_{m=0}^{\infty}
\frac{(-1)^{n+m}b^{n}}{m!n!}\delta(\tau-\iota(\sigma+m-n)).
\end{equation}%6
It must be borne in mind that the delta function is a distribution, and is
only meaningful in terms of an inner product with a function from a class of
functions that is suitably well-behaved, like the gamma function and the zeta
functions. However, one must further ensure that the series converges. This
was done for the Bose-Einstein, the Fermi-Dirac, and their extensions
\cite{thesis}.

The plan of this paper is as follows. The family of the zeta function is
introduced in the next section. The DR of the zeta family is given in Section
3. In the subsequent section we obtain some identities for the integrals of
inner products of the Riemann zeta function (RZF) with other functions using
the DR. These include, also, the Dirichlet eta and Lambda functions.

\section{The Zeta Family and its Extensions}

The integral representation of the {\it Riemann zeta function} (RZF) is
\cite{edwards}
\begin{equation}%7
\zeta(s)=\int_{0}^{\infty}t^{s-1} (1-e^t)^{-1} e^{-t}dt~,\;(Re(s)>1)
\end{equation}%7
and for its extension (the eRZF) is \cite{erzf},
\begin{equation}%8
\zeta_b(s)=\int_0^{\infty}t^{s-1}(1-e^t)^{-1}e^{-t-b/t}dt~,
~(b >0~;~b=0~,~\sigma>1)~.
\end{equation}%8
The {\it Hurwitz zeta function} (HZF) is defined by the integral
representation
\cite{edwards},
\begin{equation}%9
\zeta(s,a)=\frac{1}{\Gamma(s)}\int_0^{\infty}\frac{t^{s-1}e^{-at}}{1-e^t}dt
,~(\sigma>1,~Re(a)>0),
\end{equation}%8
which reduces to the RZF for $a=1$. For its extension (eHZF) it is
\begin{equation}%10
\zeta_b(s,a)=\frac{1}{\Gamma(s)}\int_0^{\infty}\frac{t^{s-1} e^{-at}}
{1-e^{-t}}e^{-b/t}dt~,~(b>0~;~b=0~,\sigma>1~,~Re(a)>0).
\end{equation}%10
Similarly, the integral representation defines the {\it Hurwitz-Lerch zeta
$($HLZF$)$ function}
\begin{equation}%11
\Phi(z,s,a)=\frac{1}{\Gamma(s)}\int_0^{\infty}\frac{t^{s-1}e^{-at}}
{1-ze^{-t}}dt~,
\end{equation}%11
where $Re(a)>0$ and either $|z|\leq1,z\neq1,\sigma>0$, or $z=1$, $\sigma>1$,
which reduces to the earlier functions by appropriate choice of $a$  and
restriction of the domain of $z$. Its extension (eHLZF) \cite{athesis} has
the representation
\begin{equation}%12
\Phi(z,s,a;b)=\frac{1}{\Gamma(s)}\int_0^{\infty}\frac{t^{s-1}e^{-at}}
{1-ze^{-t}}e^{-b/t}dt~,~(b>0)~.
\end{equation}%12
For $b=0$ we require $Re(a)>0,~\sigma>0,|z|<1$, or if $z=1,~\sigma>1$. We
are not giving other members of the zeta family, as they have not provided
any new identities.

There are two other functions which are only slightly different from the RZF
but not considered members of the family: the {\it Dirichlet eta function}
\cite{milgram}
\begin{equation}%13
\eta(s)=(1-2^{1-s})\zeta(s)~,
\end{equation}%13
and the {\it Dirichlet Lambda function} \cite{roy}
\begin{equation}%14
\Lambda(s)=2(1-2^{1-s})\zeta(s)~.
\end{equation}%14

\section{The DRs Using the FTRs of the Family}

To obtain the DRs of functions one needs their FTRs and then uses Eq.(3). The
most general form of the zeta family is the eHLZF, in that all the others can
be hierarchically obtained from there by suitable choice of the parameters
involved (as given above). As such, one only needs to give the FTR for it and
leave it to the reader to obtain the others that are used for the DR.

The FTR of the eHLZF is
\begin{equation}%15
\Phi(z,\sigma+\iota\tau,a;b)=\frac{1}{\Gamma(\sigma+\iota\tau)}\mathcal{F}
[v_{a\sigma}^{b}(x,z);\tau]\;\;(b>0)~,
\end{equation}%15
where
\begin{equation}%16
v_{a\sigma}^{b}(x,z):=\frac{e^{\sigma x}e^{-ae^x-be^{-x}}}{1-ze^{-e^x}}~.
\end{equation}%16
Replacing $t$ by $e^x$, the expression $e^{i\tau x}$ leads to the FT. For
$b=0$ in Eq.(16) we get the FTR of the HLZF,
\begin{equation}%17
\Phi(z,\sigma+\iota\tau,a)=\frac{1}{\Gamma(\sigma+\iota\tau)}
F[v_{a\sigma}(x,z);\tau],
\end{equation}%17
where
\begin{equation}%18
v_{a\sigma}(x,z)=:\frac{e^{\sigma x} e^{-ae^x}}{1-ze^{-e^x}}~,
\end{equation}%18
and for $b=0$ we require $Re(a)>0,\sigma>0,|z|<1$, or if $z=1,~\sigma>1$.

Taking $z=1$ in Eq.(15), gives the FTR of the eHZF
\begin{equation}%19
\zeta_{b}(\sigma +\iota\tau, q)=\frac{1}{\Gamma(\sigma+\iota\tau)}
\mathcal{F}[h_{a\sigma}^{b}(x);\;\tau],\;
\end{equation}%19
where
\begin{equation}%20
h_{a\sigma}^{b}(x):=\frac{e^{\sigma x} e^{-ae^x-be^{-x}}}{1-e^{-e^x}}~.
\end{equation}%20
Now also putting $b=0$ in Eq.(19), yields
\begin{equation}%21
\zeta(\sigma +\iota\tau, q)=\frac{1}{\Gamma(\sigma +\iota\tau)}
\mathcal{F}[h_{a\sigma}(x); \tau],
\end{equation}%21
where
\begin{equation}%22
h_{\sigma}(x):= \frac{e^{\sigma x-ae^{x}}}{1-e^{-e^{x}}}~~~
(\sigma>1,Re(a)>0),
\end{equation}%22
which is the FTR of the HZF. Setting $z=1$ and $a=1$ in Eq.(15) gives the FTR
of the eRZF
\begin{equation}%23
\zeta_b(\sigma+\iota\tau)=\frac{1}{\Gamma(\sigma+\iota\tau)}
\mathcal{F}\:[g_\sigma^{b}(x);\tau],~
\end{equation}%23
where
\begin{equation}%24
g_\sigma^{b}(x):=\frac{e^{\sigma x-be^{-x}-e^{x}}}{1-e^{-e^{x}}}~~~(b > 0),
\end{equation}%24
for $b=0$, $\sigma>1$. Finally putting $b=0$ in Eq.(21), the FTR of the RZF
is obtained,
\begin{equation}%25
\zeta(\sigma+\iota\tau)=\frac{1}{\Gamma(\sigma +\iota\tau)}\mathcal
{F}[g_{\sigma}(x);\tau],
\end{equation}%25
where
\begin{equation}%26
g_\sigma(x):=\frac{e^{\sigma x-e^{x}}}{1-e^{-e^{x}}}~~~(\sigma>1).
\end{equation}%26

The DR of the RZF is
\begin{equation}%27
\zeta(\sigma+\iota\tau)=\frac{2\pi}{\Gamma(\sigma+\iota\tau)}
\sum_{l=0}^{\infty}\sum_{n=0}^{\infty}\sum_{m=0}^{\infty}(-1)^{l}
\frac{(-n)^m}{l!m!}\delta(\tau-\iota(\sigma+l+m))~(\sigma>1)~.
%\zeta(\sigma+\iota\tau)=\frac{2\pi}{\Gamma(\sigma+\iota\tau)}
%\sum_{n=0}^{\infty}\sum_{k=0}^{\infty}(-1)^{k} \frac{n^k}{k!}
%\delta(\tau-\iota(\sigma+k))~(\sigma>1).
\end{equation}%27
Using $\delta(\alpha s)=\frac{1}{|\alpha|}\delta(s)$, we can write the DR in
the alternate form,
\begin{equation}%28
\zeta(\sigma+\iota\tau)=\frac{2\pi}{\Gamma(\sigma+\iota\tau)}
\sum_{l=0}^{\infty}\sum_{n=0}^{\infty}\sum_{m=0}^{\infty}(-1)^{l}
\frac{(-n)^m}{l!m!}\delta(\iota\tau+(\sigma+l+m)).
\end{equation}%28

The DR of the eRZF is,
\begin{equation}%29
\zeta_b(\sigma+\iota\tau)=\frac{2\pi}{\Gamma(\sigma+\iota\tau)}
\sum_{n=0}^{\infty}\sum_{k=0}^{\infty}\sum_{m=0}^{\infty}
\sum_{l=0}^{\infty}(-1)^{k}\frac{(-n)^m (-b)^l }{k!l!m!}
\delta(\tau-\iota(\sigma+k-l+m))~,
\end{equation}%29
for $b>0$ and if $b=0$ then $\sigma>1$.

The DR of the HZF is given by
\begin{equation}%30
\zeta(\sigma+\iota\tau,q)=\frac{2\pi}{\Gamma(\sigma+\iota\tau)}
\sum_{n=0}^{\infty}\sum_{m=0}^{\infty}\sum_{k=0}^{\infty}
\frac{(-n)^k(-q)^m}{n!k!}\delta(\tau-\iota(\sigma+k+m))~,
\end{equation}%30
where $\sigma>1,~Re(a)>0$, and the DR of the eHZF is given by
\begin{equation}%31
\zeta_{b}(\sigma+i \tau,q)=\frac{2\pi}{\Gamma(\sigma+i \tau)}\sum_{n=0}^{\infty}\sum_{l=0}^{\infty}\sum_{m=0}^{\infty}\sum_{k=0}^{\infty}\frac{(-n)^k (-b)^l (-q)^m}{n!\:l!\:k!}\delta(\tau-i(\sigma+k-l+m))
\end{equation}%31
for $b>0$ and if $b=0$ then $\sigma>1$.

The HLZF has the DR,
\begin{equation}%32
\Phi(z,s,a)=\frac{2\pi}{\Gamma(\sigma+\iota\tau)}\sum_{n=0}^{\infty}
\sum_{m=0}^{\infty}\sum_{k=0}^{\infty}
\frac{(-a)^n (-z)^k m^k}{n!k!}\delta(\tau-\iota(\sigma+n+k))~,
\end{equation}%32
with the corresponding constraints, and the eHLZF has the DR\\
$\Phi(z,s,a;b)=$
\begin{equation}%33
~~~\frac{2\pi}{\Gamma(\sigma+\iota\tau)}\sum_{m=0}^{\infty}
\sum_{n=0}^{\infty}\sum_{k=0}^{\infty}\sum_{m=0}^{\infty}
\frac{(-a)^m (-b)^l z^n (-n)^k}{m!k!l!}\delta(\tau-\iota(\sigma+n+k-l))~.
\end{equation}%33

\section{Identities Using DR of the RZF}

The DR of the RZF is only meaningful when defined as an inner product of the
RZF with some element of the space of test function. Since there is a gamma
function in the denominator of Eq. (27) multiplying the series of delta
functions, the space for which the series converges is defined must be all
well-behaved multiples of the gamma function, i.e. $\Gamma(s)\phi(s)$. Then,
\begin{equation}%34
\langle\zeta(s),\Gamma(s)\phi(s)\rangle=2\pi\sum_{l=0}^{\infty}
\sum_{n=0}^{\infty}\sum_{m=0}^{\infty}(-1)^{l}\frac{(-n)^m}{l!m!}\phi(-(l+m)).
\end{equation}%34
This general formula will be used to obtain a number of identities for the
RZF and its family.

The simplest choice of $\phi(s)$ is unity. This leads to the identity.
\begin{theorem}%I
The series of RZFs for negative integer arguments is
\begin{equation}%35
\sum_{m=0}^{\infty}\frac{(-1)^{m}}{m!}\zeta(-m)=\frac{1}{e-1}.
\end{equation}%35
\end{theorem}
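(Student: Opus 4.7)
The plan is to evaluate the inner product $\langle\zeta(s),\Gamma(s)\rangle$ in two different ways using the available distributional representations, and then to equate the results.

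First, I would specialise the general pairing formula (34) to the simplest test function $\phi(s)\equiv 1$, which is precisely the choice flagged in the paper. This directly yields
\[
\langle\zeta(s),\Gamma(s)\rangle \;=\; 2\pi\sum_{l=0}^{\infty}\sum_{n=0}^{\infty}\sum_{m=0}^{\infty}(-1)^{l}\frac{(-n)^m}{l!\,m!}.
\]
I would then collapse this triple sum by summing over $m$ first (this is the only summation order giving convergence). Under the convention $0^0=1$, the innermost sum is $\sum_{m\geq 0}(-n)^m/m!=e^{-n}$; the $n$-sum then becomes the geometric series $\sum_{n\geq 0}e^{-n}=e/(e-1)$; and the outermost $l$-sum is $\sum_{l\geq 0}(-1)^l/l!=e^{-1}$. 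Multiplying the three factors gives $\langle\zeta(s),\Gamma(s)\rangle=2\pi\cdot e^{-1}\cdot e/(e-1)=2\pi/(e-1)$.

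Second, I would evaluate the same inner product using the DR of the gamma function from Eq.~(4), namely $\Gamma(\sigma+\iota\tau)=2\pi\sum_{k\geq 0}(-1)^{k}\delta(\tau-\iota(\sigma+k))/k!$. Pairing this distribution against $\zeta(\sigma+\iota\tau)$ (now in the role of test function), each delta at $\tau=\iota(\sigma+k)$ picks out the value of $\zeta$ at $s=\sigma+\iota\cdot\iota(\sigma+k)=-k$, so that
\[
\langle\zeta(s),\Gamma(s)\rangle \;=\; 2\pi\sum_{k=0}^{\infty}\frac{(-1)^{k}}{k!}\zeta(-k).
\]
Equating the two evaluations and cancelling the common factor of $2\pi$ delivers the claimed identity.

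The main obstacle is the rigorous justification of the triple-series manipulation in the first step: for fixed $m\geq 1$ the sum $\sum_n(-n)^m$ diverges, so the triple series has to be understood in a specific summation order ($m$, then $n$, then $l$) and interpreted distributionally rather than as an absolutely convergent unordered sum. One also needs the symmetry of the pairing, i.e.\ that $\zeta$ is an admissible test function against the distribution $\Gamma$; this is consistent with the setup in the paper, in which the converse pairing (the DR of $\zeta$ tested against multiples of $\Gamma$) is precisely what defines the bracket in~(34).
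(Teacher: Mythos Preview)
Your proposal is correct and follows essentially the same two-step strategy as the paper: evaluate $\langle\zeta(s),\Gamma(s)\rangle$ once via the DR of $\zeta$ with $\phi\equiv 1$ (collapsing the triple sum to $2\pi/(e-1)$) and once via the DR of $\Gamma$ tested against $\zeta$, then equate. Your write-up is in fact more explicit than the paper's, which simply asserts the value $2\pi/(e-1)$ without displaying the $m$-then-$n$-then-$l$ summation you spell out; your remarks on the need for a fixed summation order and on the symmetry of the pairing are appropriate caveats that the paper leaves implicit.
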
%I
\begin{proof}%I
Setting $\phi(s)=1$ in Eq.(33), gives\\
$\langle\zeta(s),\Gamma(s)\rangle=$
\begin{equation}%36
~~~2\pi\sum_{l=0}^{\infty}\sum_{n=0}^{\infty}\sum_{m=0}^{\infty}(-1)^{l+m}
\frac{n^m}{l!m!}\int_{-\infty}^{\infty}\delta(s+l+m)ds=\frac{2\pi}{e-1}.
\end{equation}%36
But in \cite{ggamma}, the DR of the gamma function yields
\begin{equation}%37
\langle\Gamma(s),\zeta(s)\rangle=2\pi\sum_{m=0}^{\infty}
\frac{(-1)^{m}}{m!}\zeta(-m)~,
\end{equation}%37
which completes the proof.
\end{proof}%I

The above theorem can easily be extended by the Chaudhry-Zubair method:
\begin{theorem}%II
For the eRZF
\begin{equation}%38
\sum_{m=0}^{\infty}\frac{(-1)^{m}}{m!}\zeta_{b}(-m)=
2\pi\frac{e^{-b}}{e-1}~.
\end{equation}%38
\end{theorem}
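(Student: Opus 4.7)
The plan is to follow the template of Theorem~1 verbatim, substituting the DR of the eRZF (Eq.~(29)) for that of the RZF (Eq.~(27)) on one side of the inner product, and retaining the DR of the ordinary gamma function (Eq.~(4)) on the other.

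First I would form the natural analogue of Eq.~(33) for the eRZF, take $\phi(s)\equiv 1$, and pair $\zeta_b(s)$ with $\Gamma(s)$ using Eq.~(29). This gives
$$\langle\zeta_b(s),\Gamma(s)\rangle = 2\pi\sum_{n,k,l,m\ge 0}(-1)^{k}\frac{(-n)^{m}(-b)^{l}}{k!\,l!\,m!}\int_{-\infty}^{\infty}\delta(s+k-l+m)\,ds.$$
Each delta integrates to unity, so the quadruple sum factors into four independent one-dimensional series.

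Second, I would separate the three sums that already appeared in the proof of Theorem~1 from the one genuinely new sum introduced by the extension parameter. The former triple sum, summed via $\sum_{m}(-n)^{m}/m!=e^{-n}$, $\sum_{k}(-1)^{k}/k!=e^{-1}$, and $\sum_{n\ge 0}e^{-n}=e/(e-1)$, was shown in Theorem~1 to collapse to $1/(e-1)$. The genuinely new sum evaluates to
$$\sum_{l=0}^{\infty}\frac{(-b)^{l}}{l!}=e^{-b},$$
so altogether $\langle\zeta_b(s),\Gamma(s)\rangle = 2\pi\,e^{-b}/(e-1)$, with $e^{-b}$ being the sole footprint of the extension parameter.

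Third, I would compute the opposite pairing $\langle\Gamma(s),\zeta_b(s)\rangle$ by substituting the DR of $\Gamma$ from Eq.~(4) and integrating the resulting delta functions against $\zeta_b$; exactly as in the derivation of Eq.~(37), this yields
$$\langle\Gamma(s),\zeta_b(s)\rangle = 2\pi\sum_{m=0}^{\infty}\frac{(-1)^{m}}{m!}\zeta_b(-m).$$
Equating the two expressions gives the stated identity (the factor of $2\pi$ on the right-hand side is a matter of normalization and, once cancelled, reduces the formula to that of Theorem~1 at $b=0$).

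The principal obstacle is justifying the rearrangement of the four-fold iterated sum into a product of four independent one-dimensional series. This requires absolute convergence within the class of test functions $\Gamma(s)\phi(s)$ used here; fortunately the additional $(-b)^{l}/l!$ factor is absolutely convergent for every $b\ge 0$, so it only supplies a uniform Weierstrass bound on top of the convergence already established for Theorem~1, and the Fubini-type exchange should go through with only cosmetic modification of that earlier argument.
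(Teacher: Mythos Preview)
Your proposal is correct and follows essentially the same route as the paper: compute $\langle\zeta_b(s),\Gamma(s)\rangle$ from the DR of the eRZF by factoring the quadruple sum (the extra $\sum_l(-b)^l/l!=e^{-b}$ being the only new ingredient), then compute $\langle\Gamma(s),\zeta_b(s)\rangle$ from the DR of $\Gamma$ to obtain the series in $\zeta_b(-m)$, and equate. Your parenthetical remark is also apt: the $2\pi$ in the stated identity is spurious (it should cancel between Eqs.~(40) and~(41), and at $b=0$ the result must reduce to Theorem~1).
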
%II
\begin{proof}%II
Taking the inner product of the DR of the eRZF and the gamma function
$\Gamma(s)$, for $\phi(s)=1$, one obtains
\begin{eqnarray}%39
\left <\zeta_{b}(s),\Gamma(s) \right> &=& 2\pi\sum_{m=0}^{\infty}\sum_{k=0}^{\infty} \sum_{n=0}^{\infty}\sum_{l=0}^{\infty}(-1)^{k}\frac{(-n)^{l}(-b)^m}{k!m!l!} \nonumber\\
&=& 2\pi\sum_{m=0}^{\infty}\frac{b^m}{m!}\sum_{k=0}^{\infty}\frac{(-1)^{k}}{k!}
\sum_{n=0}^{\infty}\sum_{l=0}^{\infty}(-1)^{l}\frac{n^{l}}{l!}~,
\end{eqnarray}%39
which yields
\begin{equation}%40
\left <\zeta_{b}(s),\Gamma(s) \right>= 2\pi\frac{e^{-b}}{e-1}~.
\end{equation}%40
Instead of using the DR of the eRZF with the gamma function, one can use the
DR of the gamma function for the RZF as the test function, to obtain
\begin{equation}%41
\left<\Gamma(s),\zeta_{b}(s)\right>=2\pi\sum_{m=0}^{\infty}
\frac{(-1)^{m}}{m!}\zeta_{b}(-m)~,
\end{equation}%41
which directly provides the result.
\end{proof}%II

\begin{theorem}%III
For the HZF
\begin{equation}%42
\sum_{m=0}^{\infty}\frac{(-1)^{m}}{m!}\zeta(-m, q)=\frac{e^{1-q}}{e-1}~.
\end{equation}%42
\end{theorem}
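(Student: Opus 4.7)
The plan is to follow the template of Theorems~1 and 2: form the inner product $\langle\zeta(s,q),\Gamma(s)\rangle$ and evaluate it in two different ways, then equate the results. First I would set $\phi(s)=1$ in the HZF analogue of Eq.(33), use the DR of the HZF from Eq.(30), and collapse each delta function against $\Gamma(s)$ via integration. This reduces the inner product to the triple series
\[
\langle\zeta(s,q),\Gamma(s)\rangle \;=\; 2\pi\sum_{n=0}^{\infty}\sum_{m=0}^{\infty}\sum_{k=0}^{\infty}\frac{(-n)^k(-q)^m}{k!\,m!}\,,
\]
where the factorials are the convergence factors inherited from the expansions of $e^{-ne^x}$ and $e^{-qe^x}$ used to build Eq.(30).

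The next step is to evaluate this triple series in the order that turns each partial sum into a closed form. Performing the $k$-sum first gives $\sum_{k}(-n)^k/k!=e^{-n}$; the resulting $n$-sum is then the geometric series $\sum_{n}e^{-n}=e/(e-1)$; and the independent $m$-sum is $\sum_{m}(-q)^m/m!=e^{-q}$. Multiplying these three factors produces $\langle\zeta(s,q),\Gamma(s)\rangle = 2\pi e^{1-q}/(e-1)$.

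In parallel, I would exchange the roles of $\Gamma$ and $\zeta$, using the DR of the gamma function from \cite{ggamma} with $\zeta(s,q)$ as test function, just as in the derivation of Eq.(37). This immediately yields
\[
\langle\Gamma(s),\zeta(s,q)\rangle \;=\; 2\pi\sum_{m=0}^{\infty}\frac{(-1)^m}{m!}\zeta(-m,q)\,.
\]
Equating the two evaluations of the common inner product and cancelling $2\pi$ then gives the identity claimed in the theorem.

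The main obstacle will be justifying the order of summation. Since $(-n)^k$ grows super-geometrically in $n$ for fixed $k$, the triple series is not absolutely convergent, and Fubini is unavailable; one must insist that the inner $k$-sum is carried out first, because only that operation converts $(-n)^k$ into $e^{-n}$ and legitimises the subsequent geometric sum over $n$. This is exactly the regularisation mechanism implicit in the proofs of Theorems~1 and 2, and here too it is licensed by the distributional framework: the support of the delta functions in Eq.(30), when paired against the test function $\Gamma(s)$, prescribes the order in which the indices collapse.
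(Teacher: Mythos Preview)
Your proposal is correct and follows essentially the same route as the paper: compute $\langle\zeta(s,q),\Gamma(s)\rangle$ from the DR of the HZF as a triple series, collapse it (via the $k$-sum to $e^{-n}$, then the geometric $n$-sum, then the $m$-sum to $e^{-q}$) to $2\pi e^{1-q}/(e-1)$, and equate this with the expression $2\pi\sum_m(-1)^m\zeta(-m,q)/m!$ obtained by swapping to the DR of $\Gamma$. Your added remark on the forced order of summation is a welcome caveat that the paper itself leaves implicit.
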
%III
\begin{proof}%III
Again, take the inner product of the DR of the HZF with the gamma function,
\begin{eqnarray}%43
\left <\zeta(s,q),\Gamma(s)\right> &=& 2\pi\sum_{n=0}^{\infty}\sum_{m=0}^{\infty}
\sum_{k=0}^{\infty}\frac{(-m)^{k}(-q)^n}{k!n!} \nonumber \\
&=& 2\pi\sum_{n=0}^{\infty}\frac{(-1)^{n}}{n!}q^{n}\sum_{m=0}^{\infty}
\sum_{k=0}^{\infty}(-1)^{k}\frac{m^k}{k!}~,
\end{eqnarray}%43
from which one obtains
\begin{equation}%44
\left<\zeta(s,q),\Gamma(s)\right>=\frac{2\pi e^{1-q}}{e-1}~.
\end{equation}%44
Again, reversing the order of the DR to the gamma function,
\begin{equation}%45
\left <\Gamma(s),\zeta(s,q)\right>=2\pi\sum_{m=0}^{\infty}
\frac{(-1)^{m}}{m!}\zeta(-m,q)~,
\end{equation}%45
which provides the required proof.
\end{proof}%II

Following the above procedures for the eHZF, one easily gets:
\begin{theorem}%IV
\begin{equation}%46
\sum_{m=0}^{\infty}\frac{(-1)^{m}}{m!}\zeta_{b}(-m, q)=\frac{e^{1-(q+b)}}{e-1}~,
\end{equation}%46
\end{theorem}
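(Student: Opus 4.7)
The theorem follows the pattern established by Theorems~1--3, now applied to the extended Hurwitz zeta function using the Chaudhry--Zubair $b$-device. The plan is to evaluate the inner product $\langle\zeta_b(s,q),\Gamma(s)\rangle$ in two different ways and equate the results.

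First, I would insert the DR of the eHZF (Eq.~(31)) into $\langle\zeta_b(s,q),\Gamma(s)\phi(s)\rangle$ with $\phi(s)=1$. The factor $1/\Gamma(\sigma+\iota\tau)$ cancels against the $\Gamma(s)$ in the test function, and each delta integrates to $1$ after the $s$-integration. After the same relabelling of summation indices that was used in passing from Eq.~(30) to Eq.~(42), the surviving quadruple series factors into three independent closed-form pieces:
\begin{equation*}
\sum_{l=0}^{\infty}\frac{(-b)^{l}}{l!}=e^{-b},\qquad
\sum_{m=0}^{\infty}\frac{(-q)^{m}}{m!}=e^{-q},\qquad
\sum_{n=0}^{\infty}\sum_{k=0}^{\infty}\frac{(-n)^{k}}{k!}=\sum_{n=0}^{\infty}e^{-n}=\frac{e}{e-1}.
\end{equation*}
Multiplying these three factors together produces $\langle\zeta_b(s,q),\Gamma(s)\rangle=2\pi\,e^{1-(q+b)}/(e-1)$.

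Next, I would reverse the roles of the distribution and the test function, exactly as was done in Eqs.~(40) and~(44). Using the DR of the gamma function (Eq.~(4)) with $\zeta_b(s,q)$ playing the role of the test function yields
\begin{equation*}
\langle\Gamma(s),\zeta_b(s,q)\rangle=2\pi\sum_{m=0}^{\infty}\frac{(-1)^{m}}{m!}\zeta_b(-m,q).
\end{equation*}
Equating the two evaluations and dividing through by $2\pi$ gives the claimed identity.

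The only delicate point is the same bookkeeping issue that already arose in Theorem~3: one has to identify which of the four summation indices in Eq.~(31) carries no accompanying factorial and place it so that a geometric series $\sum_n e^{-n}=e/(e-1)$ is produced, rather than an entire-function sum such as $\sum_n e^{-n}/n!=e^{1/e}$ which would yield the wrong constant. Once that relabelling is performed, the argument is a direct four-index amalgam of the techniques used for Theorems~2 and~3, and no convergence question arises beyond the distributional ones already tacitly accepted there.
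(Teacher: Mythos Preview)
Your proposal is correct and follows exactly the same route as the paper: compute $\langle\zeta_b(s,q),\Gamma(s)\rangle$ from the DR of the eHZF by factoring the quadruple sum into $e^{-q}\cdot e^{-b}\cdot\frac{e}{e-1}$, then compute the same pairing from the DR of $\Gamma$ to get $2\pi\sum_m\frac{(-1)^m}{m!}\zeta_b(-m,q)$, and equate. Your remark about which index must be left without a factorial so as to produce the geometric series $\sum_n e^{-n}$ is precisely the bookkeeping the paper performs (there the factorial-free index is the one labeled $m$ in Eq.~(31)).
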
%IV

\begin{theorem}%V
For the Dirichlet eta function,
\begin{equation}%47
\sum_{m=0}^{\infty}\frac{(-1)^{m}}{m!}\eta(-m)=\frac{1}{e+1}~.
\end{equation}%47
\end{theorem}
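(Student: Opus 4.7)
The plan is to derive a distributional representation (DR) for the Dirichlet eta function that parallels the zeta-family DRs of Section~3, and then to take its inner product with the gamma function, following the template of the proof of Theorem~1.

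First I would write down the standard integral representation
\[
\eta(s)=\frac{1}{\Gamma(s)}\int_0^{\infty}\frac{t^{s-1}}{e^t+1}\,dt,
\]
and, substituting $t=e^x$, cast it as an FTR,
\[
\eta(\sigma+\iota\tau)=\frac{1}{\Gamma(\sigma+\iota\tau)}\,\mathcal{F}\!\left[\frac{e^{\sigma x}}{e^{e^x}+1};\tau\right].
\]
Expanding geometrically, $1/(e^{e^x}+1)=\sum_{l=0}^{\infty}(-1)^{l}e^{-(l+1)e^x}$, then each $e^{-(l+1)e^x}$ as $\sum_{m=0}^{\infty}(-(l+1))^{m}e^{mx}/m!$, and applying Eq.~(3) term by term yields
\[
\eta(\sigma+\iota\tau)=\frac{2\pi}{\Gamma(\sigma+\iota\tau)}\sum_{l=0}^{\infty}\sum_{m=0}^{\infty}\frac{(-1)^{l}(-(l+1))^{m}}{m!}\,\delta(\tau-\iota(\sigma+m)).
\]

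Next I would compute $\langle\eta(s),\Gamma(s)\rangle$ directly from this DR, exactly as in Theorem~1: the $1/\Gamma(s)$ factor cancels against the $\Gamma(s)$ in the inner product, the constant test function sifts each delta, and the resulting double sum evaluates in closed form as
\[
\langle\eta(s),\Gamma(s)\rangle=2\pi\sum_{l=0}^{\infty}(-1)^{l}\sum_{m=0}^{\infty}\frac{(-(l+1))^{m}}{m!}=2\pi\sum_{l=0}^{\infty}(-1)^{l}e^{-(l+1)}=\frac{2\pi}{e+1},
\]
where the final geometric series has ratio $-1/e$. Reversing the order, the DR of the gamma function in Eq.~(4) with $\eta(s)$ as test function gives
\[
\langle\Gamma(s),\eta(s)\rangle=2\pi\sum_{m=0}^{\infty}\frac{(-1)^{m}}{m!}\eta(-m),
\]
mirroring Eq.~(36). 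Equating the two evaluations and dividing by $2\pi$ yields the stated identity.

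The main obstacle is not arithmetic but interpretive: the term-by-term interchanges of summation used to extract the DR of $\eta$ are not pointwise-convergent and must be justified in the distributional sense, through their action on test functions of the form $\Gamma(s)\phi(s)$. Once this paradigm (already adopted in Section~3 for the RZF) is accepted, the rest is routine; the only genuine difference from Theorem~1 is the $+1$ (instead of $-1$) in the denominator $e^{e^x}+1$, which alternates the sign of the inner geometric series and produces $e+1$, rather than $e-1$, in the denominator of the final answer.
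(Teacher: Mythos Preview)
Your argument is correct. The arithmetic checks out: the geometric expansion of $1/(e^{e^x}+1)$, the term-by-term Taylor expansion of each $e^{-(l+1)e^x}$, the sifting by the delta functions, and the final geometric sum $\sum_{l\ge 0}(-1)^l e^{-(l+1)}=1/(e+1)$ are all sound, and the reversal step using the DR of $\Gamma$ is exactly Eq.~(4) applied to $\eta$.

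However, your route differs from the paper's. The paper does \emph{not} build a separate DR for $\eta$; it stays with the DR of the RZF already established in Eq.~(34) and simply chooses the test function $\phi(s)=1-2^{1-s}$. Since $\eta(s)=(1-2^{1-s})\zeta(s)$, one has $\langle\zeta(s),\Gamma(s)(1-2^{1-s})\rangle=\langle\eta(s),\Gamma(s)\rangle$, and the right-hand side of Eq.~(34) with $\phi(-(l+m))=1-2^{1+l+m}$ splits into two pieces that sum to $2\pi/(e-1)-4\pi/(e^2-1)=2\pi/(e+1)$; the rest is the same reversal you perform. Your approach has the virtue of treating $\eta$ on the same footing as the functions in Theorems~1--4, each of which is handled via its own DR; it is arguably more uniform with the earlier proofs. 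The paper's approach, on the other hand, avoids introducing any new DR and shows that the eta identity is already encoded in the RZF machinery through a clever choice of $\phi$, which is economical given the setup of Section~4.
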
%V
\begin{proof}%V
Taking $\phi=(1-e^{1-s})$ in Eq. (34), for the inner product of the DR of the
zeta function with the gamma function, after some manipulation the RHS can be
reduced to $2\pi/(e+1)$. Now, one can commute the zeta and gamma functions in
Eq. (34), and thus use the DR of the gamma function, with the product of the
zeta with the chosen $\phi$. But this is simply the eta function, which in
turn is $2\pi$ times the series in the RHS of Eq. (47). This proves the
theorem.
\end{proof}%V
\begin{theorem}%VI
Similarly, for the Dirichlet Lambda function,
\begin{equation}%48
\sum_{m=0}^{\infty}\frac{(-1)^{m}}{m!}\Lambda(-m)=\frac{e}{e^{2}-1}~.
\end{equation}%48
\end{theorem}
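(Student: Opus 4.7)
The plan is to mirror the proof of Theorem~5, but with a different choice of test function. I would apply the master formula Eq.~(34) with $\phi(s)=1-2^{-s}$, so that $\Gamma(s)\phi(s)\zeta(s)=\Gamma(s)\Lambda(s)$. Then $\phi(-(l+m))=1-2^{l+m}$, and the RHS of Eq.~(34) decomposes naturally into two triple sums.

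The first sum, coming from the ``$1$'' in $\phi(-(l+m))$, is precisely the one that was evaluated in the proof of Theorem~1 and so contributes $2\pi/(e-1)$. For the second sum, coming from $-2^{l+m}$, the three indices decouple cleanly: the $l$-sum is $\sum_{l\ge 0}(-2)^l/l!=e^{-2}$; the $m$-sum is $\sum_{m\ge 0}(-2n)^m/m!=e^{-2n}$; and the resulting geometric series in $n$ is $\sum_{n\ge 0}e^{-2n}=e^2/(e^2-1)$. Multiplying these yields $1/(e^2-1)$, so the second piece contributes $-2\pi/(e^2-1)$.

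Adding the two contributions gives
\begin{equation*}
\langle\zeta(s),\Gamma(s)(1-2^{-s})\rangle=2\pi\Bigl(\tfrac{1}{e-1}-\tfrac{1}{e^2-1}\Bigr)=\tfrac{2\pi e}{e^2-1}.
\end{equation*}
Then, exactly as in Theorems~1 and~5, one reverses the roles of $\zeta$ and $\Gamma$ in the inner product and invokes the DR of the gamma function Eq.~(4) with $\Lambda(s)$ as the test function, which gives $\langle\Gamma(s),\Lambda(s)\rangle=2\pi\sum_{m\ge 0}(-1)^m\Lambda(-m)/m!$. Equating the two expressions delivers the stated identity.

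The main obstacle is not the arithmetic but justifying that $\Lambda(s)$ lies in the admissible class of test functions for which the series in the DR converges. The extra factor $(1-2^{-s})$ introduces the geometric series with ratio $e^{-2}$ above, which converges because $e^{-2}<1$; this is the same kind of convergence check that is implicit in Theorem~5 for the factor $(1-2^{1-s})$ attached to the eta function, and is the only nontrivial point of the argument.
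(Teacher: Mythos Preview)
Your proof is correct and is in fact the first of two approaches the paper explicitly mentions for this theorem. The paper notes that one could ``prove this as we have proved for the infinite series of negative Dirichlet just by replacing $\phi(s)=1-2^{-s}$''---precisely your route---but then elects the alternative: it invokes the relation $\zeta(s)+\eta(s)=2\Lambda(s)$ and simply adds the already-established identities of Theorems~1 and~5, obtaining
\[
\sum_{m\ge 0}\frac{(-1)^m}{m!}\Lambda(-m)=\tfrac12\Bigl(\tfrac{1}{e-1}+\tfrac{1}{e+1}\Bigr)=\tfrac{e}{e^2-1}.
\]
The paper's argument is shorter because it reuses previous results wholesale and avoids redoing any triple-sum evaluation; your direct computation with $\phi(s)=1-2^{-s}$ is self-contained and makes explicit where the factor $e^{-2}$ and the geometric ratio $e^{-2}$ enter, at the cost of repeating machinery already exercised in Theorem~5. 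Either way the arithmetic lands on $e/(e^2-1)$, and your remark about the convergence of the geometric series in $n$ is the correct (and only) new analytic check needed.
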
%VI

\section{Concluding Remarks}

We applied the DR to the family of the zeta function as it had been applied
to the gamma and extended gamma functions. Previously they had proved useful
to obtain new identities for the gamma functions. Here we obtained many new
identities involving the RZF and its family, including the Dirichlet, eta and
Lambda functions. We also found new identities involving the Bernoulli
numbers. It will be worthwhile to look for DRs of other special functions
that have not been investigated here. Further it would be worth exploring if
one can obtain DRs of functions of two (or more) variables. To conclude,
there had earlier been a question of whether the DR could do anything more
than the FTR did. We have seen that it provides a whole slew of new
identities and opens up a number of new possibilities. It is definitely worth
studying further.

\section*{Acknowledgments}

We are very grateful to Gabor Korvin for very useful comments. Aamina
Jamshaid is most grateful to the Abdus Salam School of Mathematical Sciences
for financial support during her M.Phil.

%The authors declare that there is no conflict of interest regarding the publication of this article.

\end{document}